\documentclass[10pt,reqno]{amsart}

\usepackage{amsmath}
\usepackage{amsthm}
\usepackage{amssymb}
\usepackage{amsfonts}
\usepackage{amsxtra}
\usepackage{fullpage}
\usepackage{amscd}
\usepackage{color} 
\usepackage{bm} 
\usepackage{mathrsfs}
\usepackage{subfigure}
\usepackage{tikz}
\usepackage{tikz-cd}
\usetikzlibrary{cd}
\usepackage{enumerate}
\usepackage[all]{xy}
\usepackage[mathscr]{eucal}
\usepackage{verbatim}

\let\nc\newcommand
\let\renc\renewcommand

\theoremstyle{plain}
\newtheorem{thm}{Theorem}
\newtheorem{prop}[thm]{Proposition}
\newtheorem{cor}[thm]{Corollary}
\newtheorem{lem}[thm]{Lemma}

\theoremstyle{definition}
\newtheorem{defn}[thm]{Definition}

\numberwithin{thm}{section}

\nc{\bdm}{\begin{displaymath}}
\nc{\edm}{\end{displaymath}}
\nc{\bthm}{\begin{thm}}
\nc{\ethm}{\end{thm}}
\nc{\blem}{\begin{lem}}
\nc{\elem}{\end{lem}}
\nc{\bcor}{\begin{cor}}
\nc{\ecor}{\end{cor}}
\nc{\bprop}{\begin{prop}}
\nc{\eprop}{\end{prop}}
\nc{\bdef}{\begin{defn}}
\nc{\eddef}{\end{defn}}

\makeatletter
\renewcommand{\subsection}{\@startsection{subsection}{2}{0pt}{-3ex
plus -1ex minus -0.2ex}{-2mm plus -0pt minus
-2pt}{\normalfont\bfseries}} \makeatother

\numberwithin{equation}{section}

%\newcommand{\tail}[1]{{\mathsf{tails}}(#1)}

% Tor and Ext Functors

\DeclareMathOperator{\gr}{\mathrm{gr}}

\DeclareMathOperator{\rad}{\mathrm{rad}}

\DeclareMathOperator{\Tr}{\mathrm{Tr}}

%\widetilde{\mathrm{gr}}}}

\newcommand{\beq}{\begin{equation}\label}
\newcommand{\eeq}{\end{equation}}

\newcommand{\iso}{{\;\stackrel{_\sim}{\to}\;}}

\DeclareMathOperator{\Hom}{\mathrm{Hom}}

\nc{\Z}{\mathbb{Z}}

\newcommand{\Q}{\mathbb{Q}}

\newcommand{\C}{\mathbb{C}}
\newcommand{\h}{\mathfrak{h}}

\nc{\rank}{\textrm{rank} \,}
\nc{\ds}{\dots}

\let\mc\mathcal
\let\mf\mathfrak

\nc{\mbf}{\mathbf}
\nc{\Res}{\mathsf{Res} \, }
\nc{\Ind}{\mathsf{Ind} \, }
\nc{\cont}{\textrm{cont}}

\nc{\msf}{\mathsf}

\nc{\minusone}{-1}
\nc{\minustwo}{-2}
\nc{\Mod}{\mathrm{Mod} \,}
\nc{\ms}{\mathscr}
\nc{\Frac}{\mathrm{Frac} \,}
\nc{\ra}{\rightarrow}
\nc{\hra}{\hookrightarrow}
\nc{\lab}{\label}
\renc{\O}{\mc{O}}
\nc{\Tan}{\mc{T}}
\nc{\ul}{\underline}
\nc{\s}{\mathfrak{S}}
\nc{\g}{\mf{g}}
\nc{\pa}{\partial}
\nc{\tit}{\textit}
\nc{\Maxspec}{\mathrm{Maxspec} \, }
\nc{\gldim}{\mathrm{gl.dim}}
\nc{\rkm}{\mathrm{rk} \, (\mf{m})}
\nc{\sm}{\mathrm{sm}}
\nc{\PD}{\mathbb{PD}}
\nc{\hilb}{\textrm{Hilb}}
\nc{\<}{\langle}
\renc{\>}{\rangle}
\nc{\T}{\mathbb{T}}
\nc{\X}{\mathbb{X}}
\nc{\F}{\mathbb{F}}
\nc{\id}{\msf{id}}
\nc{\A}{\mathbb{A}}
%\nc{\Rep}{\mathrm{Rep}}
\nc{\Grat}{\mc{Grat}}
\nc{\Squo}[1]{\A^{(#1)}}
\nc{\twist}{\mathrm{twist}}
\nc{\Cd}{\mc{C}}
\nc{\Span}{\mathrm{Span}}
\nc{\Grass}{\mathrm{Gr}}
\nc{\Supp}{\mathrm{Supp}}
\nc{\Irr}{\mathrm{Irr}}
\renc{\o}{\otimes}
\renc{\gr}{\mathsf{gr}}
\nc{\fin}{\mathrm{fin}}
\nc{\aff}{\mathrm{aff}}
\nc{\algD}{\mf{D}}
\nc{\hr}{\mf{h}_{\textrm{reg}}}
\nc{\D}{\mathscr{D}}
\nc{\PIdeg}{\mathrm{PI-degree}}
\nc{\ch}{\mathrm{ch}}
\nc{\ev}{\mathsf{ev}}
\nc{\Stab}{\mathrm{Stab}}
\nc{\Der}{\mathrm{Der}}
\nc{\rightsim}{\stackrel{\sim}{\longrightarrow}}
\nc{\HZ}{H_{\mbf{h},\Z}(\Z_m)}
\nc{\sing}{\mathrm{sing}}
\nc{\dd}{\mathscr{D}}
\nc{\bc}{\mathbf{c}}
\nc{\vc}{\underline{\mathbf{c}}}
\nc{\ba}{\mathbf{a}}
\nc{\reg}{\mathrm{reg}}
\nc{\Amp}{\mathrm{Amp}}
\nc{\Nef}{\mathrm{Nef}}
\nc{\SL}{\mathrm{SL}}
\nc{\Sp}{\mathrm{Sp}}
\nc{\Sym}{\mathrm{Sym}}
\nc{\Mov}{\mathrm{Mov}}
\nc{\Pic}{\mathrm{Pic}}
\nc{\Cs}{\C^{\times}}
\nc{\Nak}[3]{\mf{M}_{{#1}} ({#2},{#3}) }
\nc{\Naka}[2]{\mf{M}({#1},{#2}) }
\nc{\Mtheta}[1]{\mc{M}_{#1}}

\nc{\bw}{\mathbf{w}}

\nc{\bn}{\mathbf{n}}
\nc{\CB}{\mathrm{CB}}
\nc{\GVect}{\Lambda}
\nc{\pZ}{\overline{Z}}
\nc{\Tang}{\mc{T}}
\nc{\K}{\mathbb{K}}

\newcommand{\mr}{\mathrm}
\newcommand{\Soc}{\mathrm{Soc}}

\setcounter{tocdepth}{1}

\nc{\red}[1]{\textcolor{red}{#1}}

\begin{document}

\title{The Rank-One property for free Frobenius extensions}

\author[G. Bellamy]{Gwyn Bellamy}
\address{School of Mathematics and Statistics, University of Glasgow, University Place,
Glasgow, G12 8QQ.}
\email{gwyn.bellamy@glasgow.ac.uk}
%\urladdr{http://www.maths.gla.ac.uk/~gbellamy/}

\author[U. Thiel]{Ulrich Thiel}
\address{Department of Mathematics, University of Kaiserslautern, Postfach 3049, 67653 Kaiserslautern, Germany}
\email{thiel@mathematik.uni-kl.de}

\begin{abstract}
A conjecture by the second author, proven by Bonnaf\'e-Rouquier, says that the multiplicity matrix for baby Verma modules over the restricted rational Cherednik algebra has rank one over $\mathbb{Q}$ when restricted to each block of the algebra. 

In this paper, we show that if $H$ is a prime algebra that is a free Frobenius extension over a regular central subalgebra $R$, and the centre of $H$ is normal Gorenstein, then each central quotient $A$ of $H$ by a maximal ideal $\mathfrak{m}$ of $R$ satisfies the rank-one property with respect to the Cartan matrix of $A$. Examples where the result is applicable include graded Hecke algebras, extended affine Hecke algebras, quantized enveloping algebras at roots of unity, non-commutative crepant resolutions of Gorenstein domains and 3 and 4 dimensional PI Sklyanin algebras.

In particular, since the multiplicity matrix for restricted rational Cherednik algebras has the rank-one property if and only if its Cartan matrix does, our result provides a different proof of the original conjecture.\\ 

\noindent R\'esum\'e. Une conjecture du deuxi\`eme auteur, qui a \'et\'e prouv\'ee par Bonnaf\'e-Rouquier, dit que la matrice de multiplicit\'e des b\'eb\'e modules de Verma de l'alg\`ebre rationnelle restreinte de Cherednik est de rang un sur $\mathbb{Q}$ lorsqu'elle est restreinte \`a chaque bloc de l'alg\`ebre.
	
Dans cet article nous montrons que si $H$ est une alg\`ebre premi\`ere qui est une extension libre de Frobenius sur une sous-alg\`ebre centrale r\'eguli\`ere $R$, et si le centre de $H$ est Gorenstein normal, alors chaque	quotient central $A$ de $H$ par un id\'eal maximal $\mathfrak{m}$ de $R$ satisfait la propri\'et\'e de rang un par rapport \`a la matrice de Cartan de $A$. Les exemples o\`u le r\'esultat est applicable incluent les alg\`ebres de Hecke gradu\'ees, les alg\`ebres de Hecke affines \'etendues, les alg\`ebres enveloppantes quantifi\'ees aux racines de l'unit\'e, les r\'esolutions	cr\'epantes non commutatives des domaines de Gorenstein et les alg\`ebres PI Sklyanin \`a dimension $3$ et $4$.
	
	En particulier, puisque la matrice de multiplicit\'e pour les alg\`ebres
	de Cherednik rationnelles restreintes a la propri\'et\'e de rang un si et
	seulement si sa matrice de Cartan l'a aussi, notre
	r\'esultat fournit une preuve diff\'erente de la conjecture originale.
\end{abstract}

\maketitle
%\tableofcontents

\section{Introduction}

\subsection{}\label{sec:intro1.1} It was conjectured by the second author \cite[Question.~2(ii)]{CHAMP} that baby Verma modules over the restricted rational Cherednik algebra satisfy a remarkable rank-one property. Namely, if $\Delta(\lambda)$, resp. $L(\lambda)$, denotes the baby Verma module, resp. irreducible module, associated to $\lambda \in \Irr W$, then the \textit{multiplicity matrix} $M$, given by 
$$
M_{\lambda,\mu} := [\Delta(\lambda):L(\mu)],
$$
has rank one over $\Q$ when restricted to each block of the algebra. This conjecture was confirmed by Bonnaf\'e-Rouquier \cite[Proposition 14.4.2]{BonnafeRouquier}, whose ingenious proof makes use of a splitting extension of the centre of the (un-restricted) rational Cherednik algebra, together with properties the corresponding decomposition map to the restricted rational Cherednik algebra. Let $C$ denote the \textit{Cartan matrix} of the algebra. The baby Verma modules for the restricted rational Cherednik algebra satisfy BGG reciprocity, $C = M^T M$, from which it is easily seen (Section~\ref{sec:exaples}) that the rank-one property can equivalently be stated as saying that the Cartan matrix of each block of the restricted rational Cherednik algebra has rank one. This reformulation makes no explicit mention of baby Verma modules. 

\subsection{} In this article we give a completely different proof of the rank-one property. Our proof applies to a broader class of algebras, including for instance quantum groups at roots of unity, and makes use of the fact that these algebras are free Frobenius $R$-algebras, for an appropriate regular central subalgebra $R$. In particular, our result applies to all but two of the families of examples considered in \cite{BGS}. 

In the introduction, we assume for simplicity that $\K$ is an algebraically closed field of characteristic zero; in the body of the paper we prove a more general result that does not require this assumption. We start with $H$ a (unital) affine $\K$-algebra and central subalgebra $R \subset H$ such that $H$ is a free Frobenius extension of $R$; see Section~\ref{sec:Nakayamaext}. 

We assume that $R$ is regular, $H$ is prime and the centre $Z := Z(H)$ of $H$ is Gorenstein and integrally closed. Let $\mf{m}$ be a maximal ideal of $R$ and $A = H  / \mf{m} H$. The $\K$-algebra $A$ is finite dimensional and split. Let $K_0(A)$ be the Grothendieck group of finitely generated projective $A$-modules and $G_0(A)$ the Grothendieck group of all finitely generated $A$-modules. The Cartan map is the canonical map $C \colon K_0(A) \to G_0(A)$. If $\Lambda$ is a (finite) set parametrizing isomorphism classes of simple $A$-modules, with representatives $L(\lambda)$ for each $\lambda \in \Lambda$, then we can think of $C$ as a $|\Lambda| \times |\Lambda|$ matrix whose entries are the multiplicities $[P(\lambda): L(\mu)]$, where $P(\lambda)$ is the projective cover of $L(\lambda)$. If $A = A_1 \oplus \cdots \oplus A_k$ is the block decomposition of $A$ then $C$ admits a corresponding decomposition $C = C_1 \oplus \cdots \oplus C_k$, where $C_i$ is the Cartan matrix of $A_i$.

We say that the algebra $A$ has the \textit{rank-one property} if each matrix $C_i$ has rank one over $\Q$. Though this appears at first to be a very strong property, our main result is:

\begin{thm}\label{thm:rankonemain}
 $A$ has the rank-one property. 
\end{thm}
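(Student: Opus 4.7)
The plan is to realise each block $A_i$ of $A$ as the special fibre of a central-simple free Frobenius deformation over a regular local base, and then to extract the rank-one conclusion via a Brauer-type reciprocity.

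First I would pass to the completion $\widehat{R}$ of $R$ at $\mathfrak{m}$, a regular complete local $\K$-algebra. Base changing, $\widehat{H} := H \otimes_R \widehat{R}$ remains a free Frobenius extension of $\widehat{R}$, and its centre $\widehat{Z} = Z \otimes_R \widehat{R}$ is a finite, normal, Gorenstein $\widehat{R}$-algebra (normality being preserved because affine $\K$-algebras are excellent). Since a Noetherian normal semilocal ring is a finite product of normal local domains, $\widehat{Z} = \prod_{i=1}^{k} \widehat{Z}_{\mathfrak{n}_i}$ indexed by the maximal ideals $\mathfrak{n}_i$ of $Z$ lying over $\mathfrak{m}$. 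The corresponding central idempotents yield $\widehat{H} = \prod_i \widehat{H}_i$, each factor a free Frobenius $\widehat{R}$-algebra (as a summand) with centre the local normal Gorenstein domain $\widehat{Z}_{\mathfrak{n}_i}$ and special fibre $\widehat{H}_i/\mathfrak{m}\widehat{H}_i \cong A_i$.

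Next, I identify the generic fibre of each $\widehat{H}_i$. Since $H$ is prime of finite rank over its centre $Z$, Posner's theorem gives that $Q(H) := H \otimes_Z \Frac(Z)$ is central simple over $\Frac(Z)$. Because $\widehat{Z}_{\mathfrak{n}_i}$ is a local domain finite over $\widehat{R}$, one has $\widehat{Z}_{\mathfrak{n}_i} \otimes_{\widehat{R}} \Frac(\widehat{R}) = \Frac(\widehat{Z}_{\mathfrak{n}_i})$, so
\[
\widehat{H}_i \otimes_{\widehat{R}} \Frac(\widehat{R}) \;=\; Q(H) \otimes_{\Frac(Z)} \Frac(\widehat{Z}_{\mathfrak{n}_i}),
\]
a central simple algebra over $\Frac(\widehat{Z}_{\mathfrak{n}_i})$. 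In particular it has a unique simple module up to isomorphism.

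Finally, I would apply Brauer-type reciprocity along a suitable DVR. Choose a discrete valuation ring $O$ with a local map $\widehat{R} \to O$, residue field $\K$, and fraction field large enough that the generic fibre of $\widehat{H}_i \otimes_{\widehat{R}} O$ retains its unique simple module $V_i$ (arranging $\Frac(O) \supseteq \Frac(\widehat{Z}_{\mathfrak{n}_i})$ is the convenient option). The base change $\widehat{H}_i \otimes_{\widehat{R}} O$ is then a free Frobenius $O$-order whose unique generic simple lifts to an $O$-lattice, and reduction modulo the maximal ideal of $O$ produces a decomposition vector $D_i$ from the rank-one Grothendieck group of the generic fibre to $G_0(A_i)$. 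The Frobenius reciprocity accompanying a free Frobenius extension then yields the factorisation $C_i = D_i^{T} D_i$ (up to a Nakayama twist invisible to the rank over $\Q$), forcing $\mathrm{rank}_{\Q} C_i = 1$. The principal obstacles I foresee are precisely here: first, constructing $O$ so that the generic fibre stays irreducible, which requires careful navigation between $\widehat{R}$ and the finite extension $\Frac(\widehat{Z}_{\mathfrak{n}_i})$ and essential use of the normality of $Z$; and second, transferring the classical Brauer reciprocity identity from its usual symmetric-algebra context to the present Frobenius context, where the Gorenstein hypothesis on $Z$ is what I expect to control the Nakayama automorphism sufficiently.
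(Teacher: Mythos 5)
Your proposal takes a genuinely different route from the paper: it is in essence a reconstruction of the Bonnaf\'e--Rouquier strategy (a decomposition map from a ``generically simple'' fibre plus Brauer-type reciprocity), whereas the paper never forms a decomposition matrix at all. The paper computes $\operatorname{rank} C$ as $\dim_{\K}\mathrm{Im}\,\tau'$ for the Higman map $\tau'$ using Theorem~3(a) of Lorenz--Fitzgerald Tokoly, traps $\mathrm{Im}\,\tau'$ inside the socle of the free rank-one $Z'$-module $Z_{\alpha}'$ (whence $\dim_{\K}\mathrm{Im}\,\tau'\le |\mathrm{Bl}(Z')|$ because $Z'$ is Frobenius), and closes the loop with M\"uller's theorem $|\mathrm{Bl}(Z')|=|\mathrm{Bl}(A)|$. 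Unfortunately, your route has a gap at its central step. The generic fibre $\widehat{H}_i\otimes_{\widehat{R}}\operatorname{Frac}(\widehat{R})$ is central simple over $\operatorname{Frac}(\widehat{Z}_{\mathfrak{n}_i})$, a finite extension of $\operatorname{Frac}(\widehat{R})$ of some degree $e_i$, and $e_i>1$ in most examples. The moment you base change to a field containing $\operatorname{Frac}(\widehat{Z}_{\mathfrak{n}_i})$ --- which is exactly what you propose --- the centre $\operatorname{Frac}(\widehat{Z}_{\mathfrak{n}_i})\otimes_{\operatorname{Frac}(\widehat{R})}\operatorname{Frac}(O)$ splits into $e_i$ factors, the generic fibre becomes a product of $e_i$ central simple algebras, and there are $e_i$ pairwise non-isomorphic generic simples. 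Then $D_i$ has $e_i$ columns and $D_i^{T}D_i$ only bounds $\operatorname{rank}_{\Q}C_i$ by $e_i$, not by $1$. To finish you would have to show that all $e_i$ generic simples in a block have the same image in $G_0(A_i)$; that Galois-descent statement is precisely the hard core of Bonnaf\'e--Rouquier's argument and is absent from your sketch. Keeping $\operatorname{Frac}(O)$ small so that the generic fibre remains a single central simple algebra does not escape the problem: the fibre is then non-split over $\operatorname{Frac}(O)$, which takes you outside the hypotheses of the reciprocity you want to quote.

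The reciprocity itself is the second gap. The identity $C={}^{t}DD$ is a theorem for \emph{symmetric} orders over a complete DVR whose generic fibre is \emph{split} semisimple. Here the order is only Frobenius, so at best one could hope for a Nakayama-twisted variant, and the generic fibre is either non-split or has several simples, so Schur-index corrections also enter. You flag both points as ``obstacles'', but they are not routine: no off-the-shelf statement covers a non-symmetric Frobenius order with non-split generic fibre, and even granting such an identity, rank one does not follow until the columns of $D_i$ have been shown to coincide. As written the argument does not close; you would need to supply the Galois argument of Bonnaf\'e--Rouquier in this generality, or else switch to the Higman-map argument of the paper, which sidesteps decomposition matrices entirely.
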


The proof of Theorem~\ref{thm:rankonemain} centres around the Higman map $\tau' \colon H \to H$, which descends to the Higman map $\tau \colon A \to A$. The key result is Theorem~3(a) of Lorenz-Fitzgerald Tokoly \cite{LorenzFitzgerald} (see also \cite{LorenzFDHopf}), which says that the rank of the Cartan matrix $C$ equals the dimension of $\mr{Im}\, \tau$ as a $\K$-vector space. We show that this dimension equals the number of blocks of $A$. We do this by identifying $\mr{Im}\, \tau$ with the socle of the image $Z_{\alpha}'$ in $A$ of the Nakyama centre $Z_{\alpha}$ of $H$. 

\subsection{Examples} In the motivating case, $H = H_c(W)$ is the rational Cherednik algebra at $t = 0$ associated to the complex reflection group $(\h,W)$. This $\C$-algebra contains $R = \C[\h]^W \o \C[\h^*]^W$ as a central subalgebra and $Z$ is an integrally closed Gorenstein ring \cite[Theorem~3.3, Lemma~3.5]{EG}. It has been shown in \cite{BGS} that $H$ is a (symmetric) free Frobenius extension of $R$. If $\mf{m} \subset \C[\h]^W \o \C[\h^*]^W$ is the augmentation ideal, then the central quotient $A = H / \mf{m} H = \overline{H}_c(W)$ is the restricted rational Cherednik algebra. Since this algebra satisfies BGG reciprocity, the rank-one property with respect to the Cartan matrix $C$ is equivalent to the rank-one property with respect to the multiplicity matrix $M$. 

Theorem~\ref{thm:rankonemain} applies to many other examples commonly studied in representation theory. These include graded Hecke algebras, extended affine Hecke algebras, affine nil-Hecke algebras, quantized enveloping algebras at roots of unity, non-commutative crepant resolutions of Gorenstein domains and 3 and 4 dimensional PI Sklyanin algebras; see section~\ref{sec:otherexamples}.

\section{The proof}

We begin again, dropping any assumptions on the field $\K$, unless specifically stated. Theorem~\ref{thm:rankonemain} will follow from the more general Theorem~\ref{thm:rankonemainbody} below. 

\subsection{The Nakayama centre}\label{sec:Nakayamaext}

We start with $H$ a (unital) ring and a central subring $R \subset H$ such that $H$ is a free Frobenius extension of $R$. By definition, this means that $H$ is a finite free $R$-module and there is an isomorphism 
\begin{equation}\label{eq:Hbiiso}
\phi \colon {}_1 H_{\alpha^{-1}} \stackrel{\sim}{\longrightarrow} \Hom_R(H,R)
\end{equation}
of $H$-bimodules, for an $R$-linear ring automorphism $\alpha$ of $H$. Here ${}_1 H_{\alpha^{-1}} = H$ as left $H$-modules, but with right action given by $h \cdot a = h \alpha^{-1}(a)$. The automorphism $\alpha$ is unique up to inner automorphisms and one can check that $\alpha |_{Z} = \mr{Id}_Z$, where $Z := Z(H)$. Let 
\[
Z_{\alpha}(H) = \{ h \in H \, | \, ha = \alpha(a) h \, \textrm{for all} \, a \in H \};
\]
this is the \textit{Nakayama centre} of $H$. We note that under the isomorphism $\phi$ of \eqref{eq:Hbiiso}, 
\begin{equation}\label{eq:Zalphaiso}
    Z_{\alpha}(H) \iso \Hom_R(H/ [H,H], R).
\end{equation}
Since $\alpha |_{Z} = \mr{Id}_Z$, we have an identification of $Z$-modules $Z_{\alpha}(H) \cong \Hom_R(H/ [H,H], R)$. 

Assume that $H$ is a prime ring. Then it is a prime PI ring since it is a finite $R$-module. Let $d$ denote the PI degree of $H$. Let $T(H)$ denote the trace ring of $H$ (see \cite[Section~13.9]{MR}) and $\Tr \colon H \to T(H)$ the reduced trace. %; see \cite[Section~9a]{ReinerMaximalOrders} for the definition of these. 
The following result is \cite[Proposition~2.3]{BraunSymmetric}. 

\begin{lem}\label{lem:BrauncocentreNakayam}
	Assume that $T(H) = H$ and $d$ is invertible in $H$. Then the map
	$$
	\Hom_R(H/[H,H],R) \stackrel{\sim}{\longrightarrow} \Hom_R(Z,R), \quad f \mapsto f |_Z,
	$$
	is an isomorphism of $Z$-modules, with inverse $g \mapsto g \circ (d^{-1} \mr{Tr})$. 
\end{lem}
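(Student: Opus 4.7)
The plan is to verify directly that the restriction map $\mathrm{res}\colon f \mapsto f|_Z$ and the map $\Phi\colon g \mapsto g \circ (d^{-1}\Tr)$ are mutually inverse, by pinning down three basic properties of the reduced trace and then reducing the entire argument to the identity $H = Z + [H,H]$.

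The three preparatory observations are: (a) since $T(H)=H$, the reduced trace already takes values in $H$, and being central-valued it in fact lands in $Z$, giving $\Tr\colon H\to Z$; (b) by cyclicity, $\Tr$ vanishes on $[H,H]$, so it descends to a $Z$-linear map $\overline{\Tr}\colon H/[H,H]\to Z$; and (c) since $\Tr$ is $Z$-linear with $\Tr(1)=d$, we have $\Tr(z) = dz$ for every $z\in Z$, which is where the invertibility of $d$ first enters. Granted (a)--(c), the map $\Phi$ is well-defined, and (c) immediately yields $\mathrm{res}\circ\Phi = \id_{\Hom_R(Z,R)}$, since $\Phi(g)(z) = g(d^{-1}\Tr(z)) = g(z)$.

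The reverse composition $\Phi\circ\mathrm{res}$ sends $f$ to the functional $h \mapsto f(d^{-1}\Tr(h))$, so the remaining task is to prove
\[
f(h) = f\bigl(d^{-1}\Tr(h)\bigr)\quad\text{for every }h\in H\text{ and every }f\in\Hom_R(H/[H,H],R).
\]
To secure this for \emph{all} $R$-linear functionals $f$, the natural route is to prove the stronger statement $H = Z + [H,H]$; equivalently, that $d^{-1}\overline{\Tr}\colon H/[H,H]\to Z$ is a $Z$-module isomorphism. Observation (c) already furnishes a one-sided inverse via the composite $Z \hra H \twoheadrightarrow H/[H,H]$, so only injectivity of $d^{-1}\overline{\Tr}$ remains.

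The main obstacle is therefore this last identity. I would establish it by first passing to the generic fibre: since $H$ is prime PI of PI degree $d$, the central localization $H\otimes_Z K$ with $K = \Frac Z$ is a central simple $K$-algebra of dimension $d^2$, and after base change to a splitting field $L/K$ becomes $\Mat_d(L)$. For a matrix algebra, $d^{-1}\tr$ is a $K$-linear projector onto scalars with kernel equal to $[\Mat_d(L),\Mat_d(L)]$, which gives the desired decomposition at the generic point. To descend the splitting from $H\otimes_Z K$ back to $H$ itself, one uses the hypothesis $T(H)=H$ to ensure that $\Tr$ already takes values in $Z$ (no denominators other than $d$ appear), combined with invertibility of $d$, so that $d^{-1}\Tr\colon H\to Z$ is an honest $R$-linear Reynolds operator on the integral ring whose kernel is precisely $[H,H]$; this is Braun's argument in \cite{BraunSymmetric}. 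Combined with the preliminary observations, this yields $\Phi\circ\mathrm{res} = \id$ and completes the proof.
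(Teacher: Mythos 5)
The paper offers no proof of this lemma at all --- it is quoted directly from \cite[Proposition~2.3]{BraunSymmetric} --- so your attempt has to stand on its own. Your observations (a)--(c) and the computation $\mathrm{res}\circ\Phi=\mathrm{id}$ are correct, but the route you take for the other composition rests on a false intermediate claim: the ``stronger statement'' $H=Z+[H,H]$, equivalently that $d^{-1}\Tr\colon H\to Z$ has kernel exactly $[H,H]$, does \emph{not} follow from the hypotheses and is false in general. The paper's own example $H=\C[x]\rtimes\s_2$ over $R=Z=\C[x^2]$ (prime, $Z$ integrally closed so $T(H)=H$, $d=2$) is a counterexample: writing $s$ for the reflection, a direct computation gives $[H,H]=\mathrm{span}\{x^{2k+1},\,x^m s: k\ge 0,\ m\ge 1\}$, so $s\notin Z+[H,H]$, while $\Tr(s)=0$, so $s-d^{-1}\Tr(s)=s\notin[H,H]$. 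The precise point of failure is your descent step. The generic-fibre argument shows $\Ker(\Tr)=[H\o_Z K,H\o_Z K]$ with $K=\Frac Z$, but intersecting with $H$ only yields the \emph{saturation} of $[H,H]$: clearing denominators gives $z\bigl(h-d^{-1}\Tr(h)\bigr)\in[H,H]$ for some $0\neq z\in Z$, not $h-d^{-1}\Tr(h)\in[H,H]$. In the example, $s$ is exactly such a torsion class, since $x^2 s=\tfrac12[x,xs]\cdot x \in [H,H]$ modulo the span above (indeed $x^2s\in[H,H]$), yet $s\notin[H,H]$.

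The lemma survives because it only concerns $R$-valued functionals, and these cannot see torsion. From the generic fibre you correctly get that the class of $h-d^{-1}\Tr(h)$ in $H/[H,H]$ is killed by some $0\neq z\in Z$; since $Z$ is a domain finite over the domain $R$, every nonzero $z\in Z$ divides a nonzero element of $R$ (from its integral equation), so the class is $R$-torsion. Any $f\in\Hom_R(H/[H,H],R)$ vanishes on $R$-torsion because $R$ is a domain, whence $f(h)=f(d^{-1}\Tr(h))$ for all $h$ and all $f$, which is exactly what $\Phi\circ\mathrm{res}=\mathrm{id}$ requires. Equivalently: dualize the split exact sequence $0\to N\to H/[H,H]\to Z\to 0$, where the surjection is induced by $d^{-1}\Tr$ and $N$ is torsion, and use $\Hom_R(N,R)=0$. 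This is in substance Braun's argument; your outline assembles the right ingredients but asserts along the way an identity that the hypotheses do not deliver, so as written the proof has a genuine gap.
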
 

Here the map $d^{-1} \mr{Tr} \colon H \to Z$ is a projection, realizing $Z$ as a direct summand of $H$ as $Z$-modules. 

\begin{lem}\label{lem:Nakayamcentresplit}
	Assume that $T(H) = H$ and $d$ is invertible in $H$. Then $Z_{\alpha}(H)$ is a direct summand of $H$ as $Z$-modules. 
\end{lem}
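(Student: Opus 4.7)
The plan is to transport, via the Frobenius bimodule isomorphism $\phi$ of \eqref{eq:Hbiiso}, an evident $Z$-module direct summand decomposition of $\Hom_R(H,R)$ back to a decomposition of $H$ in which $Z_\alpha(H)$ appears as a summand.

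First, because $\alpha|_Z = \mr{Id}_Z$, the twisted right $Z$-action on ${}_1H_{\alpha^{-1}}$ coincides with the left $Z$-action, so $\phi$ can be viewed as an isomorphism of $Z$-modules $\phi\colon H \iso \Hom_R(H,R)$, which by \eqref{eq:Zalphaiso} restricts to a $Z$-linear isomorphism $Z_\alpha(H) \iso \Hom_R(H/[H,H],R)$.

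Second, recall from the comment following Lemma \ref{lem:BrauncocentreNakayam} that $p := d^{-1}\Tr \colon H \to Z$ is a $Z$-module retraction of the inclusion $Z \hookrightarrow H$, so that $H = Z \oplus \Ker p$ as $Z$-modules. Applying the additive functor $\Hom_R(-,R)$ produces a $Z$-module decomposition
\[
\Hom_R(H,R) \;\cong\; \Hom_R(Z,R) \oplus \Hom_R(\Ker p, R),
\]
in which the first summand embeds as the image of the pullback $p^*\colon g \mapsto g \circ p$. By Lemma \ref{lem:BrauncocentreNakayam}, this image is precisely the submodule $\Hom_R(H/[H,H],R) \subset \Hom_R(H,R)$.

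Finally, pulling this decomposition back through $\phi^{-1}$ writes $H$ as a $Z$-module direct sum whose first summand is $Z_\alpha(H)$, giving the lemma. I do not anticipate any real obstacle: the argument is essentially bookkeeping, and the only point requiring care is checking $Z$-linearity of each identification, which is ensured by $\alpha|_Z = \mr{Id}_Z$ together with the explicit form of the inverse in Lemma \ref{lem:BrauncocentreNakayam}.
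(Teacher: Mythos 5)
Your proof is correct and follows essentially the same route as the paper: both reduce via the Frobenius isomorphism $\phi$ to showing that $\Hom_R(H/[H,H],R)$ is a $Z$-module summand of $\Hom_R(H,R)$, and both obtain this by dualising the splitting $Z \hookrightarrow H \to H/[H,H] \xrightarrow{d^{-1}\Tr} Z$ furnished by Lemma~\ref{lem:BrauncocentreNakayam}. The only difference is presentational: you package the splitting as an explicit direct sum $H = Z \oplus \Ker(d^{-1}\Tr)$ before applying $\Hom_R(-,R)$, whereas the paper phrases it as a chain of maps composing to the identity.
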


\begin{proof}
	Since $Z_{\alpha}(H)$ corresponds to $\Hom_R(H/[H,H],R)$ under the isomorphism $\phi$, it suffices to show that $\Hom_R(H/[H,H],R)$ is a summand of $\Hom_R(H,R)$ as $Z$-modules. Consider the series of maps 
	$$
	Z \hookrightarrow H \to H / [H,H] \stackrel{d^{-1} \mr{Tr}}{\longrightarrow} Z 
	$$
	whose composition $Z \to Z$ is the identity. They dualise to 
$$
\Hom_R(Z,R) \stackrel{(d^{-1} \mr{Tr})^*}{\longrightarrow} \Hom_R(H / [H,H],R) \to \Hom_R(H,R) \twoheadrightarrow \Hom_R(Z,R),
$$ 
where the composition $\Hom_R(Z,R) \to \Hom_R(Z,R)$ must again be the identity. Here we have used the fact that $Z$ is a summand of $H$ to conclude that the last arrow is surjective. Since $(d^{-1} \mr{Tr})^*$ is an isomorphism by Lemma~\ref{lem:BrauncocentreNakayam}, we deduce that $\Hom_R(H / [H,H],R) \to \Hom_R(H,R)$ is a split injection.  
\end{proof}

Let $\mf{m}$ be a maximal ideal of $R$ and $A = H / \mf{m} H$. Let $Z'$, respectively $Z_{\alpha}'$, denote the image of $Z$, respectively of $Z_{\alpha}(H)$, in $A$. Lemma~\ref{lem:Nakayamcentresplit} implies that the map $Z_{\alpha}(H) \to A$ realises $Z_{\alpha}(H) / \mf{m} Z_{\alpha}(H)$ as a direct summand of $A$.

\begin{lem}\label{lem:mainrankone}
	Let $\mf{m} \subset R$ be a maximal ideal, contained in the regular locus. Assume that:
	\begin{enumerate}
		\item The integer $d$ is invertible as an element of $H$. 
		\item $Z$ is Gorenstein and integrally closed.   
	\end{enumerate}
	Then $Z / \mf{m} Z$ is a Frobenius algebra, the morphism $Z / \mf{m} Z \to Z'$ is an isomorphism and $Z_{\alpha}(H) / \mf{m} Z_{\alpha}(H) \cong Z_{\alpha}'$ is a free rank-one $Z'$-module.
\end{lem}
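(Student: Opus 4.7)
The plan is to dispatch the two identifications and the Frobenius claim separately: the identifications are soft consequences of the direct-summand results in Lemmas \ref{lem:BrauncocentreNakayam} and \ref{lem:Nakayamcentresplit}, while the Frobenius property reduces, via \eqref{eq:Zalphaiso}, to a standard fact about Gorenstein homomorphisms applied to $R \hookrightarrow Z$.

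First I would observe that Lemma \ref{lem:BrauncocentreNakayam} exhibits $Z$ as an $R$-linear direct summand of $H$ via $d^{-1}\Tr$, so $\mf{m}H \cap Z = \mf{m}Z$ and the canonical surjection $Z/\mf{m}Z \twoheadrightarrow Z'$ is an isomorphism, which is the middle claim. Applying the same reasoning with Lemma \ref{lem:Nakayamcentresplit}, which splits $Z_{\alpha}(H)$ off $H$ as a $Z$-module, yields $Z_{\alpha}(H)/\mf{m}Z_{\alpha}(H) \iso Z_{\alpha}'$.

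Next, being a direct summand of the free $R$-module $H$, $Z$ is finitely generated and projective over $R$; the hypothesis that $\mf{m}$ lies in the regular locus then makes $R_{\mf{m}}$ a regular local ring and $Z_{\mf{m}}$ a free $R_{\mf{m}}$-module. Combining \eqref{eq:Zalphaiso} with Lemma \ref{lem:BrauncocentreNakayam} gives a $Z$-module isomorphism $Z_{\alpha}(H) \cong \Hom_R(Z,R)$. Since $Z$ is finitely presented and $R$-flat at $\mf{m}$, the functor $\Hom_R(Z,-)$ commutes with base change to $R/\mf{m}$, so
\[
Z_{\alpha}' \;\cong\; \Hom_R(Z,R) \otimes_R R/\mf{m} \;\cong\; \Hom_{R/\mf{m}}(Z',R/\mf{m})
\]
as $Z'$-modules. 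Hence the assertion that $Z_{\alpha}'$ is free of rank one over $Z'$ becomes equivalent to $Z'$ being a Frobenius $R/\mf{m}$-algebra.

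To finish I would argue that $Z' = Z/\mf{m}Z$ is Frobenius as follows. The ring $R_{\mf{m}}$ is regular, hence Gorenstein; $Z_{\mf{m}}$ is Gorenstein as a localization of the Gorenstein ring $Z$; and $R_{\mf{m}} \hookrightarrow Z_{\mf{m}}$ is flat and module-finite. The standard fiber criterion for Gorenstein homomorphisms (Matsumura, Theorem~23.4; Bruns--Herzog, Theorem~3.3.15) then shows that the localization of the fiber $Z/\mf{m}Z$ at each of its maximal ideals is Gorenstein. As a finite-dimensional commutative $R/\mf{m}$-algebra, $Z/\mf{m}Z$ decomposes as a finite product of these local Gorenstein rings, hence is itself Gorenstein; and Gorenstein commutative algebras that are finite-dimensional over a field are automatically Frobenius. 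The main technical obstacle is this last step: one must import the fiberwise Gorenstein criterion, verify its hypotheses at every maximal ideal of $Z$ lying above $\mf{m}$, and reassemble the local conclusions into a global Frobenius statement for $Z/\mf{m}Z$. The normality hypothesis on $Z$ does not appear to enter this argument and is presumably needed only elsewhere in the paper.
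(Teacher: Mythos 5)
Your overall architecture matches the paper's, and your treatment of the Frobenius and rank-one claims is a legitimate variant: where the paper kills $\mf{m}R_{\mf{m}}$ by a regular sequence on the free $R_{\mf{m}}$-module $Z_{\mf{m}}$ and quotes \cite[Proposition~3.1.19(b)]{CohMac}, then invokes \cite[Proposition~2.6]{BraunSymmetric} to identify $\Hom_{R_{\mf{m}}}(Z_{\mf{m}},R_{\mf{m}})$ with $Z_{\mf{m}}$ before reducing modulo $\mf{m}$, you use the flat-fiber criterion for Gorenstein homomorphisms at each maximal ideal of $Z$ over $\mf{m}$ and then dualize over the residue field. Both routes are sound, of comparable length, and rest on the same two inputs ($R_{\mf{m}}$ regular, $Z$ Gorenstein); your version trades Braun's relative duality statement for a base-change computation of $\Hom_R(Z,R)$, which is valid since $Z_{\mf{m}}$ is finite free over $R_{\mf{m}}$.

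There is, however, one genuine error. You assert that ``the normality hypothesis on $Z$ does not appear to enter this argument.'' It does, and at the very first step: Lemmas~\ref{lem:BrauncocentreNakayam} and \ref{lem:Nakayamcentresplit}, which you invoke to split $Z$ and $Z_{\alpha}(H)$ off $H$ and to identify $Z_{\alpha}(H)$ with $\Hom_R(Z,R)$ via \eqref{eq:Zalphaiso}, both carry the hypothesis $T(H)=H$ in addition to the invertibility of $d$. The lemma you are proving does not assume $T(H)=H$; it assumes $Z$ is integrally closed, and it is precisely this normality that forces $H$ to coincide with its trace ring (\cite[Proposition~13.9.5]{MR} --- this is the opening sentence of the paper's proof). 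Without that verification, none of the direct-summand statements you rely on --- and hence neither the isomorphism $Z/\mf{m}Z \iso Z'$ nor the identification $Z_{\alpha}(H)\cong\Hom_R(Z,R)$ feeding your duality computation --- is available. The fix is a single sentence, but as written your argument applies two lemmas whose hypotheses have not been checked, and explicitly dismisses the very assumption that makes them checkable.
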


\begin{proof}
	First we note that the fact that $Z$ integrally closed implies that the trace ring $T(H)$ of $H$ equals $H$; see \cite[Proposition~13.9.5]{MR}. Then, as noted previously, the fact that the integer $d$ is invertible as an element of $H$ means that $Z$ is a direct summand of $H$ as a $Z$-module. It follows that the canonical map $Z / \mf{m} Z \to Z'$ is an isomorphism.  
	
	Since $H$ is a free $R$-module, it follows that $Z$ is a projective $R$-module. Let $R_{\mf{m}}$ denote the localization of $R$ at $\mf{m}$ and $Z_{\mf{m}} = Z \o_R R_{\mf{m}}$. Then $Z_{\mf{m}}$ is a free $R_{\mf{m}}$-module. Therefore, we need to show that $Z / \mf{m} Z = Z_{\mf{m}} / {\mf{m}}Z_{\mf{m}}$ is Frobenius. Since $R_{\mf{m}}$ is assumed to be a regular local ring, $\mf{m} R_{\mf{m}}$ is generated by a regular sequence $f_1, \ds, f_k$. This sequence is also regular in $Z_{\mf{m}}$. It follows from Proposition~3.1.19(b) of \cite{CohMac} that $Z_{\mf{m}} / {\mf{m}}Z_{\mf{m}}$ is zero-dimensional Gorenstein. This means it is a Frobenius algebra. 
	
	The hypothesis of Lemma~\ref{lem:BrauncocentreNakayam} hold and hence  $\Hom_R(H/[H,H],R) \cong \Hom_R(Z,R)$ as $Z$-modules. Now, $\Hom_R(Z,R)_{\mf{m}} \cong \Hom_{R_{\mf{m}}}(Z_{\mf{m}},R_{\mf{m}})$ as $Z_{\mf{m}}$-modules. Since $H$ is prime, $Z$ is a domain and hence equi-dimensional. Also, $R$ is a regular ring and thus Gorenstein. Under these hypothesis, \cite[Proposition~2.6]{BraunSymmetric} says that $Z_{\mf{m}}$ being Gorenstein is equivalent to $\Hom_{R_{\mf{m}}}(Z_{\mf{m}},R_{\mf{m}}) \cong Z_{\mf{m}}$ as $Z_{\mf{m}}$-modules. Since $\Hom_R(H/[H,H],R)$ is isomorphic to $Z_{\alpha}(H)$ by \eqref{eq:Zalphaiso}, we deduce that $Z_{\alpha}(H)_{\mf{m}} \cong Z_{\mf{m}}$. This implies that $Z_{\alpha}(H)/ \mf{m} Z_{\alpha}(H)$ is isomorphic to $Z / \mf{m} Z$. 
\end{proof}

\subsection{The main theorem}

Let $\K$ denote the residue field of $R$ at $\mf{m}$. Then $A$ is a finite-dimensional $\K$-algebra. We assume that $A$ is split over $\K$. Let $p \ge 0$ denote the characteristic of $\K$.  

The centre of $A$ is denoted $Z(A)$. Clearly $Z' \subset Z(A)$, but in general the inclusion is strict. As in the introduction, $A = A_1 \oplus \cdots \oplus A_k$ is the block decomposition of $A$ and $C = C_1 \oplus \cdots \oplus C_k$ the corresponding decomposition of the Cartan matrix. We set $C_{\K} \colon K_0(A) \o_{\Z} \K \to G_0(A) \o_{\Z} \K$ with decomposition $C_{\K} = C_{\K,1} \oplus \cdots \oplus C_{\K,k}$. We call the rank of $C_{\K}$ over $\K$ the \textit{$p$-rank} of $C$ (abuse of language).

\begin{thm}\label{thm:rankonemainbody}
	If the $p$-rank of every $C_{\K,i}$ is non-zero then the $p$-rank of each $C_{\K,i}$ is exactly one. 
\end{thm}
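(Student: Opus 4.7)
The overall plan is to combine the Lorenz--Fitzgerald--Tokoly identity $\dim_{\K}\im\,\tau = p\text{-rank}(C_{\K})$ with an upper bound $\dim_{\K}\im\,\tau \leq k$, where $k$ denotes the number of blocks of $A$ and $\tau\colon A \to A$ is the Higman trace. Because $\tau$ is $Z(A)$-linear it respects the block decomposition, so $\dim_{\K}\im\,\tau = \sum_i p\text{-rank}(C_{\K,i})$, and the hypothesis forces this sum to be at least $k$; once the upper bound is in place, each summand is forced to equal one.

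To construct $\tau$ and exploit the Frobenius structure over $R$, I would fix dual $R$-bases $\{x_i\},\{y_i\}$ for the pairing associated to the isomorphism $\phi$ of \eqref{eq:Hbiiso} and form the Higman map $\tau'\colon H \to H$, $\tau'(h) = \sum_i x_i h y_i$. The standard dual-basis identity yields $\im\,\tau' \subseteq Z_{\alpha}(H)$. Reducing modulo $\mathfrak{m}$, the classes $\{\bar x_i\},\{\bar y_i\}$ form dual $\K$-bases for the induced Frobenius structure on $A$, so the Higman trace of $A$ is $\tau = \overline{\tau'}$; in particular $\im\,\tau \subseteq Z_{\alpha}'$. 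Lemma~\ref{lem:mainrankone} now says $Z'$ is a commutative Frobenius $\K$-algebra and $Z_{\alpha}'$ is a free rank-one $Z'$-module. If $Z' = \prod_{j=1}^{m} Z_j'$ is its decomposition into local Frobenius factors, then $\dim_{\K}\Soc_{Z'}(Z_{\alpha}') = \dim_{\K}\Soc(Z') = m$; and since the primitive idempotents of $Z'$ are central idempotents of $A$, they define a coarsening of the $A$-block decomposition, giving $m \leq k$.

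The crux, and the step I expect to be the main obstacle, is the inclusion $\im\,\tau \subseteq \Soc_{Z'}(Z_{\alpha}')$---equivalently, $\rad(Z')\cdot\im\,\tau = 0$. Since $Z' \subseteq Z(A)$ is central and $r\,\tau(a) = \tau(ra)$ for $r \in Z'$, this amounts to showing $\tau$ vanishes on $\rad(Z')\cdot A$. I would attempt this globally before reducing: using $Z_{\alpha}(H) \cong \Hom_R(Z,R)$ from Lemma~\ref{lem:BrauncocentreNakayam}, the map $\tau'$ transports to an explicit $Z$-linear map into $\Hom_R(Z,R) \cong Z$ (via the Gorenstein duality invoked in the proof of Lemma~\ref{lem:mainrankone}), so that $\im\,\tau'$ corresponds to a concrete ideal $J \subseteq Z$. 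Applying the regular-sequence argument from the proof of Lemma~\ref{lem:mainrankone} to this ideal should place the reduction of $J$ modulo $\mathfrak{m}$ inside $\Soc(Z')$. Granting this, the chain
\[
k \leq \sum_i p\text{-rank}(C_{\K,i}) = \dim_{\K}\im\,\tau \leq m \leq k
\]
collapses to equalities, forcing $p\text{-rank}(C_{\K,i}) = 1$ for every $i$.
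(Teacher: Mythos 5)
Your overall architecture matches the paper's: the Lorenz--Fitzgerald Tokoly identity $\dim_{\K}\mathrm{Im}\,\tau = p\text{-rank}(C_{\K})$, the upper bound via the socle of $Z_{\alpha}'$ as a $Z'$-module, and the collapse of a chain of inequalities against the number of blocks. (One pleasant difference: you only use the elementary inequality $m \le k$, i.e.\ that the primitive idempotents of $Z'\subseteq Z(A)$ refine to block idempotents of $A$, whereas the paper closes its chain by invoking M\"uller's theorem $|\mathrm{Bl}(Z')| = |\mathrm{Bl}(A)|$; your observation that only the easy direction is needed is correct and is a genuine small simplification.)

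However, there is a real gap at exactly the step you flag as the crux, namely $\mathrm{Im}\,\tau \subseteq \Soc_{Z'}(Z_{\alpha}')$, and the route you sketch for it does not work. Transporting $\mathrm{Im}\,\tau'$ to an ideal $J \subseteq Z$ via $Z_{\alpha}(H)\cong \Hom_R(Z,R)\cong Z$ gives you nothing to exploit: on the Azumaya locus of $H$ the Higman trace is surjective onto the (Nakayama) centre, so $J$ is generically all of $Z$, and there is no global ``smallness'' of $J$ for a regular-sequence argument to propagate. The computation with the regular sequence $f_1,\ldots,f_k$ in Lemma~\ref{lem:mainrankone} only establishes that $Z/\mf{m}Z$ is zero-dimensional Gorenstein; it carries no information about where $J$ lands modulo $\mf{m}$. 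The inclusion you need is a genuinely finite-dimensional statement about the Frobenius $\K$-algebra $A$: the Higman trace of any Frobenius algebra kills $\rad A$ and has image inside $\Soc_A A$ (this is \cite[Proposition~2.20]{LorenzBook}, also Section~2.4.2 of \cite{LorenzFitzgerald}), and then one concludes via the elementary observation that $(\Soc_A A) \cap Z_{\alpha}' \subseteq \Soc_{Z'} Z_{\alpha}'$, since $Z'$ is central in $A$. That citation (or a proof of the fact that elements of the Higman ideal factor through projectives and hence annihilate the radical) is the missing ingredient; with it supplied, the rest of your argument goes through. A minor further point: to get $\dim_{\K}\Soc(Z') = m$ you need each simple $Z'$-module to be one-dimensional, which requires noting that $Z'$ is split over $\K$ because $A$ is (assumption~\eqref{list:H4}).
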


We note that if $\K$ has characteristic zero then it is automatic that the rank ($=$ $p$-rank) of each block of $C$ is at least one. Therefore, Theorem~\ref{thm:rankonemain} is a direct consequence of Theorem~\ref{thm:rankonemainbody}. 

\subsection{Proof of Theorem~\ref{thm:rankonemainbody}}\label{sec:proofmainrankone}

Recall that we have assumed the following hold. 
\begin{enumerate}
	\item $H$ is a free Frobenius $R$-algebra. \label{list:H1}
	\item $R$ is a regular ring. \label{list:H2}
	\item $Z = Z(H)$ is Gorenstein and integrally closed. \label{list:H3} 
  	\item $A := H/\mf{m} H$ is split over $\K := R / \mf{m}$, for $\mf{m} \lhd R$ maximal. \label{list:H4}
	\item The $p$-rank of every block $C_i$ of $C$ is non-zero. \label{list:H5}
	\item $H$ is a prime ring whose PI degree is invertible in $H$.  \label{list:H6}
\end{enumerate}

The isomorphsim \eqref{eq:Hbiiso} defines an $R$-linear pairing on $H$ by $(g,h) := \phi(1)(gh)$. We fix a pair $\{ g_j : j \in I\}$ and $\{h_j : j \in I\}$ of dual $R$-bases for $H$, meaning that $(g_i,h_j) = \delta_{i,j}$. This allows us to define the \textit{Higman map} 
\[
\tau \colon H \to H, \quad \tau(x) = \sum_{j \in I} g_j x h_j.
\]
By \cite[Lemma~2.13]{LorenzBook}, the image of $\tau$ is a $Z$-submodule of $Z_{\alpha}(H)$. 

The $R$-linear automorphism $\alpha$ descends to a $\K$-linear automorphism $\alpha'$ of $A$ and the isomorphism \eqref{eq:Hbiiso} induces an isomorphism ${}_{1} A_{(\alpha')^{-1}} \iso \Hom_{\K}(A,\K)$. Thus, $A$ is also Frobenius with $\{ g_j : j \in I\}$ and $\{h_j : j \in I\}$ giving dual bases. Again, there is a Higman map $\tau' : A \to A$, $\tau'(x) = \sum_{j \in I} g_j x h_j$, with image $\mr{Im}\, \tau'$ contained in $Z_{\alpha'}(A)$. In the case where $A$ is symmetric, this image is called the Higman ideal or projective centre of $A$. 

By construction, there is a commutative diagram
\begin{equation}\label{eq:tautauprime}
\begin{tikzcd}
	H  \ar[r] \ar[d,"\tau"'] & A \ar[d,"{\tau'}"] \\
	Z_{\alpha}(H) \ar[r,"\iota"] & Z_{\alpha'}(A). 
\end{tikzcd}	
\end{equation}
Note that the image of $\iota$ is $Z_{\alpha}'$, which is identified with $Z_{\alpha}(H) / \mf{m} Z_{\alpha}(H)$ by Lemma~\ref{lem:mainrankone}. Recall that the socle $\Soc_H M$ of an $H$-module $M$ is the sum of all simple submodules. 

\begin{lem}\label{lem:imtauprimesoc}
	$\mr{Im}\, \tau' \subset \Soc_{Z'} Z_{\alpha}'$. 
\end{lem}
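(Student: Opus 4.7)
The plan is to prove the inclusion in three stages. First, I would verify that $\mr{Im}\,\tau' \subset Z_\alpha'$: since the canonical projection $H \to A$ is surjective, every element of $A$ lifts to some $x \in H$, and the commutative diagram~\eqref{eq:tautauprime} then gives $\tau'(\bar{x}) = \iota(\tau(x)) \in \iota(Z_\alpha(H)) = Z_\alpha'$. Second, I would observe that $\tau$ is $Z$-linear: for $z \in Z$, centrality yields $\tau(zx) = \sum_j g_j zx h_j = z \sum_j g_j x h_j = z\tau(x)$; reducing modulo $\mf{m}$, $\tau'$ is $Z'$-linear, and hence $\mr{Im}\,\tau'$ is a $Z'$-submodule of $Z_\alpha'$.

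Third, by Lemma~\ref{lem:mainrankone}, $Z'$ is a finite-dimensional commutative Frobenius $\K$-algebra and $Z_\alpha' \cong Z'$ as a $Z'$-module; consequently $\Soc_{Z'}(Z_\alpha') = \{y \in Z_\alpha' : \mr{rad}(Z') \cdot y = 0\}$, and the claim reduces to proving $\mr{rad}(Z') \cdot \mr{Im}\,\tau' = 0$. The key input is the classical fact that the Higman ideal of a Frobenius $\K$-algebra is contained in the right socle of the regular module, that is, $\mr{Im}\,\tau' \cdot \mr{rad}(A) = 0$. Granting this, $\mr{rad}(Z')$ consists of central nilpotent elements of $A$ and is therefore contained in $\mr{rad}(A)$, so centrality of $\mr{rad}(Z')$ gives
\[
\mr{rad}(Z') \cdot \mr{Im}\,\tau' \;=\; \mr{Im}\,\tau' \cdot \mr{rad}(Z') \;\subset\; \mr{Im}\,\tau' \cdot \mr{rad}(A) \;=\; 0,
\]
which establishes $\mr{Im}\,\tau' \subset \Soc_{Z'}(Z_\alpha')$.

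The principal obstacle is the Frobenius-algebra claim that $\mr{Im}\,\tau' \cdot \mr{rad}(A) = 0$. I expect this to follow from the Nakayama-automorphism identity $\tau'(x) \cdot a = \tau'(x \cdot \alpha'(a))$ (up to conventional choices of $\alpha'$) for the Higman map in a Frobenius algebra, combined with the fact that $\alpha'$ preserves $\mr{rad}(A)$ and that $\tau'$ can be recognised as the trace of a projective bimodule; alternatively, the containment $\mr{Im}\,\tau' \subset \Soc(A_A)$ should be extractable directly from the development of Higman ideals in \cite{LorenzBook} and the results of \cite{LorenzFitzgerald}.
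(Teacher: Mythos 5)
Your argument is correct and follows essentially the same route as the paper: both reduce the statement to the containments $\mr{Im}\,\tau' \subset Z_{\alpha}'$ (via the commutative diagram) and $\mr{Im}\,\tau' \subset \Soc_A A$, the latter being exactly the fact the paper cites from Section~2.4.2 of \cite{LorenzFitzgerald} and \cite[Proposition~2.20]{LorenzBook}. Your explicit verification that $\mr{rad}(Z') \subset \mr{rad}(A)$ just spells out the paper's first step, namely $(\Soc_A A) \cap Z_{\alpha}' \subset \Soc_{Z'} Z_{\alpha}'$.
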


% consists of $\K$-split simple modules. Indeed, if $L \subset \Soc A$ is simple then by (3), $\End_A(L) = \K$ and hence $Z'$ acts on $L$ by a scalar from $\K$.

\begin{proof}
	First we note that $(\Soc_A A) \cap Z_{\alpha}' \subset \Soc_{Z'} Z_{\alpha}'$. Next, by diagram~\eqref{eq:tautauprime}, the image of $\tau'$ is contained in $Z_{\alpha}'$ since the image of $\tau$ is contained in $Z_{\alpha}(H)$. On the other hand, it is explained in Section 2.4.2 of \cite{LorenzFitzgerald} (see also \cite[Proposition~2.20]{LorenzBook}) that the image of $\tau'$ is contained in $\Soc_A A$. The lemma follows.  
\end{proof}

Let $Z' = B_1 \oplus \cdots \oplus B_l$ denote the blocks of $Z'$. Since $Z'$ is Gorenstein, Lemma~\ref{lem:mainrankone} implies that the socles of the indecomposable summands $B_i Z_{\alpha}'$ of $Z_{\alpha}'$ are simple. 

We note that $Z(A)$ is split over $\K$ because we have assumed in \eqref{list:H4} that $A$ is split over $\K$; see \cite[Exercise 7.5]{LamFirstcourse}. Since $Z' \subset Z(A)$, the algebra $Z'$ is also split over $\K$ \cite{LamFirstcourse}. Thus, every simple $Z'$-module is one dimensional over $\K$ and it follows from Lemma~\ref{lem:imtauprimesoc} that $\dim_{\K} \mr{Im}\, \tau' \le \ell(\Soc_{Z'} Z_{\alpha}')$, the length of $\Soc_{Z'} Z_{\alpha}'$. By Lemma~\ref{lem:mainrankone}, the latter equals $\ell(\Soc_{Z'} Z')$. Since we have assumed by \eqref{list:H3} that $Z'$ is Frobenius, the number of blocks of $Z'$ equals the number of simple modules in the socle of $Z'$. Thus, we have shown that   
$$
\dim_{\K} \mr{Im}\, \tau' \le |\mr{Bl}(Z')|. 
$$ 
Using once again \eqref{list:H4} that $A$ is split over $\K$, the key result Theorem 3(a) of \cite{LorenzFitzgerald} says that $\dim_{\K} \mr{Im}\, \tau'$ equals the $p$-rank of the Cartan map $C_{\K}$. Assumption \eqref{list:H5} implies that the $p$-rank of $C$ (the rank of the map $C_{\K}$) is at least as big as $|\mr{Bl}(A)|$. M\"uller's Theorem \cite[Proposition~2.7]{Ramifications} implies that $|\mr{Bl}(Z')| = |\mr{Bl}(A)|$. Thus, $\dim_{\K} \mr{Im}\, \tau \ge |\mr{Bl}(A)|$ and hence 
$$
\dim_{\K} \mr{Im}\, \tau' \le |\mr{Bl}(Z')| = |\mr{Bl}(A)| \le \dim_{\K} \mr{Im}\, \tau'. 
$$
We deduce that $|\mr{Bl}(A)| = \dim_{\K} \mr{Im}\, \tau'$ equals the $p$-rank of $A$. Since the $p$-rank of each block is at least one, we have proven Theorem~\ref{thm:rankonemainbody}.   

We note that a consequence of the proof is that $\mr{Im} \, \tau' = \Soc_{Z'} Z_{\alpha}'$. 

\subsection{The Casimir map}

We also have the Casimir map $q \colon A \to A$ given by $q(a) = \sum_j h_j a g_j$. By \cite[Lemma~2.13]{LorenzBook}, the image of $q$ is contained in $Z(A)$. It is a consequence of \cite[Proposition 2.20]{LorenzBook} that both $\tau'$ and $q$ vanish on $\rad A$ and have image in $\Soc_{A} A$. 

If $H$ is the free Frobenius extension $\C[x] \rtimes \s_2$ of $\C[x^2]$ and $\mf{m} = (x^2)$ then $A = \C[x] / (x^2) \rtimes \s_2$ is precisely the example considered in Exercise~2.2.3 of \cite{LorenzBook}. This is also a special case of the graded Hecke algebras considered in \cite{BGS}. In this example, $\mr{Im}\, \tau'$ is one-dimensional whilst $q$ is the zero map. Thus, the rank of $\tau'$ does not equal the rank of $q$ in general for a Frobenius algebra. This example also shows that the image of $\tau'$ need not be central.

\section{Examples}\label{sec:exaples}

\subsection{Triangular decompositions} In this section, we use freely the notation from \cite{BelThielHighest}. We begin by justifying the claim made in Section~\ref{sec:intro1.1} that if $H$ is a free Frobenius extension such that $A$ is a graded algebra with triangular decomposition then the multiplicity matrix $M$ has the rank-one property if and only if the Cartan matrix $C$ does.

To be precise, we assume that $H$ is $\Z$-graded, $R$ a graded (central) subalgebra and $\mf{m}$ a homogeneous maximal ideal of $R$ such that $A = H / \mf{m} H$ admits a triangular decomposition $A = A^- \o T \o A^+$ as in \cite[Definition~3.1]{BelThielHighest}. Let $B^{\pm}$ be the subalgebras of $A$ generated by $A^{\pm}$ and $T$.

\begin{lem}\label{lem:MCrankone}
Assume that $T$ is semi-simple and $B^- \cong (B^+)^{\circledast}$ as graded $T$-bimodules. Then $A$ satisfies the rank-one property with respect to the multiplicity matrix $M$ if and only if it does so with respect to the Cartan matrix $C$. 
\end{lem}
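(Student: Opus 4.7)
The plan is to reduce the claim to the elementary linear-algebra identity $\mr{rank}_{\Q}(N^T N) = \mr{rank}_{\Q}(N)$ for any integer matrix $N$, the essential input being a block-preserving form of BGG reciprocity.

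First, under the stated hypotheses---$A = A^- \o T \o A^+$ graded, $T$ semisimple, and $B^- \cong (B^+)^{\circledast}$ as graded $T$-bimodules---the formalism of \cite{BelThielHighest} equips $A$ with baby Verma modules $\Delta(\lambda)$ indexed by $\lambda \in \Lambda := \Irr T$, their simple heads $L(\lambda)$, and projective covers $P(\lambda)$. The duality assumption $B^- \cong (B^+)^{\circledast}$ supplies costandard modules $\nabla(\lambda)$ sharing their composition factors with $\Delta(\lambda)$, from which one obtains BGG reciprocity in the form
\[
[P(\lambda) : \Delta(\mu)] \;=\; [\Delta(\mu) : L(\lambda)].
\]
Summing over intermediate standard layers gives the matrix identity $C = M^T M$.

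Next, I would verify that this identity is compatible with the block structure. Each $\Delta(\lambda)$ and $P(\lambda)$ is indecomposable and hence sits in a single block of $A$; this yields a partition $\Lambda = \Lambda_1 \sqcup \cdots \sqcup \Lambda_k$ matching the block decomposition $A = A_1 \oplus \cdots \oplus A_k$. Both $M$ and $C$ are therefore block-diagonal with respect to this partition, and $C = M^T M$ restricts on each block to $C_i = M_i^T M_i$. The final step is the linear algebra: for any matrix $N$ with rational entries, embedding into $\R$ and using $N^T N x = 0 \Rightarrow \|N x\|^2 = x^T N^T N x = 0 \Rightarrow N x = 0$, we get $\mr{rank}_{\Q}(N^T N) = \mr{rank}_{\Q}(N)$. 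Applied blockwise this gives $\mr{rank}_{\Q}\, C_i = \mr{rank}_{\Q}\, M_i$, and as $[\Delta(\lambda):L(\lambda)] \ge 1$ for each $\lambda$, neither rank is zero. Hence $\mr{rank}_{\Q}\, C_i = 1$ if and only if $\mr{rank}_{\Q}\, M_i = 1$.

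The main obstacle is Step~1: ensuring that BGG reciprocity in its block-respecting form genuinely follows from the stated hypotheses. This is precisely what the highest-weight-category formalism of \cite{BelThielHighest} for graded algebras with triangular decomposition is designed to deliver---the duality assumption $B^- \cong (B^+)^{\circledast}$ is exactly the input needed to build the costandard modules and match their composition series with the standard ones---so once that machinery is invoked the remainder of the argument is purely linear-algebraic.
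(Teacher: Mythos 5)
Your proposal is correct and follows essentially the same route as the paper: the paper likewise reduces to the identity $C = M^T M$, citing BGG reciprocity from \cite[Theorem~1.3]{BelThielHighest}, and then uses that $\mathrm{rank}(M^T M) = \mathrm{rank}(M)$ for a real-valued matrix, applied blockwise. Your write-up merely makes explicit the block-compatibility and the linear-algebra step that the paper leaves as a one-line remark.
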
 

Indeed, in this situation, $C = M^T M$ by BGG-reciprocity \cite[Theorem~1.3]{BelThielHighest}. Since $M$ is integer (and hence real) valued, the rank of $C$ equals the rank of $M$ and it follows that each block of $M$ has rank one if and only if each block of $C$ does.

Thus, for graded algebras with a triangular decomposition, the rank-one property can be encoded as 
\begin{equation}\label{eq:rankonemult}
[\Delta(\lambda) : L(\rho) ] \dim_{\K} \Delta(\mu) = [\Delta(\mu) : L(\rho) ] \dim_{\K} \Delta(\lambda)	
\end{equation}
for all $\lambda,\rho, \mu \in \Irr T$. Indeed, the fact that each block of $M$ has rank one implies that there exist rational numbers $a_{\lambda},b_{\lambda}$ such that $[\Delta(\lambda) : L(\rho)] = a_{\lambda} b_{\rho}$. If $d := \sum_{\mu} b_{\mu} \dim_{\K} L(\mu)$, then $\dim_{\K} \Delta(\lambda) = a_{\lambda} d$. Hence, both sides of \eqref{eq:rankonemult} equal $a_{\lambda} a_{\mu} b_{\rho} d$.

In particular, Lemma~\ref{lem:MCrankone} applies to $H = H_c(W)$, the rational Cherednik algebra and $\mf{m} \lhd R := \C[\h]^W \o \C[\h^*]^W$ the augmentation ideal. The quotient $A = H / \mf{m} H = \overline{H}_{c}(W)$ is the restricted rational Cherednik algebra.

\subsection{Other examples}\label{sec:otherexamples} In this section, we assume all algebras are defined over an algebraically closed field of characteristic zero. In addition to rational Cherednik algebras (I), most examples in \cite{BGS} satisfy the hypothesis of Theorem~\ref{thm:rankonemainbody}. Numbered as in \cite{BGS}, they are:
\begin{enumerate}
    \item[(II)] Graded Hecke algebras with $R = Z$ regular.
    \item[(III)] The extended affine Hecke algebra with $R = Z$.
    \item[(IV)] Affine nil-Hecke algebra with $R = Z$. 
    \item[(V)] Quantized enveloping algebra at an $\ell$th root of unity with $R = Z_0$ the $\ell$-centre. The $\ell$-centre is the localization of a polynomial ring \cite[III, Theorem~6.2(2)]{BrownGoodearlbook} and hence regular. The actual centre $Z$ is a complete intersection ring (and hence Gorenstein) by \cite[Theorem~21.3]{DeConProcesiQuantumGroups}. Moreover, it is shown in the proof of \cite[Theorem~21.5]{DeConProcesiQuantumGroups} that it is an integrally closed domain.  
\end{enumerate}
It is noted in \cite[Remark~5.4]{GordonNato} that the centre of (VI) Quantum Borels, considered in \cite{BGS}, is Gorenstein if and only if each simple factor of $G$ is of type $B_r,C_r,D_r$ ($r$ even) $E_6,E_7,E_8$ or $G_2$. When this is the case, the other hypothesis of Theorem~\ref{thm:rankonemainbody} hold (with $R = Z_+$ of \cite[Section 7]{BGS}) since the centre is actually smooth. However, the examples (VII) Quantized function algebras of \cite{BGS} do not satisfy the hypothesis of Theorem~\ref{thm:rankonemainbody} because their centres are not Gorenstein \cite[Remark~5.4]{GordonNato}.   

%It is not clear to us when the remaining algebras (VI) Quantized function algebras and (VII) Quantum Borels considered in \cite{BGS} satisfy the hypothesis of Theorem~\ref{thm:rankonemainbody}; see for instance \cite[Remark~5.4]{GordonNato}. 

We note that example (V) is $\Z$-graded and the restricted quantum group $A$ is equal to $H / \mf{m} H$ for a graded maximal ideal $\mf{m} \lhd R$. Then $A$ admits a triangular decomposition, implying that it has the rank-one property with respect to baby Verma modules. This was also shown in \cite[Proposition~4.16(2)]{CoreThiel} using results from the literature on the multiplicities $[\Delta(\lambda) : L(\mu)]$. As explained in the proof of \cite[Proposition~4.16(2)]{CoreThiel}, this implies that Lusztig's small quantum group also satisfies the rank-one property. 

The results of \cite{BraunSymmetric} provide other important examples satisfying the hypothesis of Theorem~\ref{thm:rankonemainbody}. Firstly, we may take $H$ to be a non-commutative crepant resolution of an integrally closed Gorenstein domain over an algebraically closed field of characteristic zero; see \cite[Example~2.22]{BraunSymmetric}. Secondly, we may take $H$ to be a 3 or 4 dimensional PI Sklyanin algebra \cite[Example~2.24]{BraunSymmetric}. 

The main result of \cite{TransferFrobenius} says that, for a filtered algebra, being a free Frobenius extension lifts from the associated graded. This provides an effective way of checking the property for a large class of examples. %For instance, this shows that quantum Schubert cells at $q$ an $\ell$th root of unity are examples; see \cite[Section|5.1]{TransferFrobenius}. Here $R$ is the subalgebra of $U_q[w]$ generated by the $\ell$th powers $\{ E_{\alpha}^{\ell} : \alpha \in \Phi[w] \}$. (Why is $Z$ Gorenstein and integrally closed?)

Finally, we expect that quiver Hecke algebras (KLR algebras) \cite{KLRKhovanovLauda,RuquierKLR} also satisfy the hypothesis of Theorem~\ref{thm:rankonemain}. 

\subsection{Positive characteristic}\label{sec:poscharrankone}

As explained in \cite[Proposition~4.16(1)]{CoreThiel}, combining results in the literature on the representation theory of the restricted enveloping algebra of simple Lie algebras in positive characteristic allows one to show that these algebras have the rank-one property. Therefore, it is natural to ask to what extent our result extends to algebras in positive, or mixed, characteristic. In these situations, assumption \eqref{list:H5} of Section~\ref{sec:proofmainrankone} can fail. However, in the case $H = U(\mf{g})$, for $\mf{g}$ a simple Lie algebra over an algebraically closed field of characteristic $p > 0$, the second Kac-Weisfeiler conjecture \cite{PremetKW} provides an effective way of showing that \eqref{list:H5} holds for regular characters $\chi$. For rational Cherednik algebras at $t = 1$, the result \cite[Proposition~6.8]{MoPositiveChar} is similarly applicable. 

Rather, it is assumption \eqref{list:H6} that causes difficulties since the PI degree is always  zero in $\K$ in these examples. Recall that \eqref{list:H6} is used to argue that $Z$ and $Z_{\alpha}$ are summand of $H$. It raises the question: is $Z$ a direct summand of $H$ in either of these two examples? 

When $\mf{g} = \mf{psl}_n$, with $n = d p^m > 4$, $m \ge 1$ and $1 \le d < p$, it follows from \cite{BraunCentre} that $Z$ is not a direct summand of $H$. If $Z$ were a summand of $H$ then it would be Cohen-Macaulay since $H$ is Cohen-Macaulay, but that contradicts \cite[Theorem~L(1)]{BraunCentre}. We do not know whether $Z$ is a summand of $H$ when $\mf{g}$ also satisfies Jantzen's standard assumptions; see (1)-(3) in \cite[Introduction]{BraunCentre}.

\section{Acknowledgements}

The first author would like to thank Ken Brown and Lewis Topley for fruitful discussions. We would like to thank Amiram Braun for explaining that the answer to the final question of Section~\ref{sec:poscharrankone} is no. We would also like to thank the referee for a number of suggestions that improved the article. The first author was partially supported by a Research Project Grant from the Leverhulme Trust and by the EPSRC grant EP-W013053-1. This work is a contribution to the SFB-TRR 195 "Symbolic Tools in Mathematics and their Application'' of the German Research Foundation (DFG).

\def\cprime{$'$} \def\cprime{$'$} \def\cprime{$'$} \def\cprime{$'$}
\def\cprime{$'$} \def\cprime{$'$} \def\cprime{$'$} \def\cprime{$'$}
\def\cprime{$'$} \def\cprime{$'$} \def\cprime{$'$} \def\cprime{$'$}
\def\cprime{$'$} \def\cprime{$'$}

%\bibliographystyle{plain}
%\bibliography{biblo}

\end{document}